\newtheorem{theorem}{Theorem}[section]
\newtheorem{lemma}[theorem]{Lemma}
\theoremstyle{definition}
\theoremstyle{remark}
\numberwithin{equation}{section}
\newcommand{\ba}{\begin{array}}
\newcommand{\ea}{\end{array}}
\begin{document}
\date{}
\title{ \bf\large{A Note on Homoclinic Orbits for Second Order Hamiltonian Systems}}
\author{Bingyu Li\textsuperscript{1}\ \ Fengying Li\textsuperscript{2}\footnote{Corresponding Author, Email: lify0308@163.com}\ \ Donglun Wu\textsuperscript{3}\ \ Shiqing Zhang\textsuperscript{3}
 \\
{\small \textsuperscript{1} College of Information Science and Technology, Chengdu University of Technology,\hfill{\ }}\\
\ \ {\small Chengdu, Sichuan, 610059, P.R.China.\hfill{\ }}\\
{\small \textsuperscript{2} School of Economic and Mathematics, Southwestern University of Finance and Economics,\hfill{\ }}\\
\ \ {\small Chengdu, Sichuan, 611130, P.R.China.\hfill {\ }}\\
{\small \textsuperscript{3} Department of Mathematics, Sichuan University, \hfill{\ }}\\
\ \ {\small Chengdu, Sichuan, 610064, P.R. China.\hfill {\ }} }
\maketitle
\begin{abstract}
{In this paper, we study the existence for the homoclinic orbits for the second order Hamiltonian systems. Under suitable conditions on the potential $V$, we apply the direct method of variations and the Fourier analysis to prove the existence of homoclinc orbits.}

 \noindent{\emph{Keywords}}: Homoclinic solutions, Direct method of variations, Fourier analysis.

 \noindent{\emph{2000 Mathematical Subject Classification}}: 34C15, 34C25, 58F
\end{abstract}

\section {Introduction and Main Results}
\setcounter{section}{1} \setcounter{equation}{0}

Since the pioneer work of Rabinowitz [32] in 1978, many papers used variational methods to study the existence of periodic solutions for Hamiltonian systems. In recent 30 years, variational methods are also widely applied to the existence of homoclinic orbits for Hamiltonian systems(for examples,[1-31,34-48]etc.),specially,in 1990, P.H. Rabinowitz[36] used Mountain Pass Lemma and approximation arguments of periodic solutions to prove the following theorem about the homoclinic orbits for super-quadratic second order Hamiltonian systems:
\begin{theorem}\label{th1}
Suppose the second order Hamiltonian system:
\begin{equation}\label{e1}
\ddot{q}(t)+V_q(t,q)=0,
\end{equation}
where $q\in R^n$ and $V$ satisfies:
\begin{enumerate}
\item[($V_1$):] $V(t,q)=-\frac{1}{2}(L(t)q,q)+W(t,q)$, where $L$ is a continuous $T-$periodic matrix valued function and $W\in C^1(R\times R^n,R)$ is $T-$periodic in $t$;
\item[($V_2$):] $L(t)$ is positive definite symmetric for all $t\in [0,T]$;
\item[($V_3$):] there is a constant $\mu >2$ such that $0<\mu W(t,q)\leq (q,W_q(t,q))$ for all $q\in R^n\setminus \{0\}$;
\item[($V_4$):] $W(t,q)=o(|x|)$ as $q\rightarrow 0$ uniformly for $t\in [0,T]$.
\end{enumerate}
Then (\ref{e1}) possesses a nontrivial homoclinic solution $q(t)$ emanating from zero such that $q\in W^{1,2}(R, R^n)$.
\end{theorem}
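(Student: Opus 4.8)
The plan is to obtain the homoclinic solution as a limit of $2kT$-periodic solutions, producing the latter by the direct method on spaces of periodic functions treated via Fourier series. First I would fix the Hilbert space $E=W^{1,2}(\R,\R^n)$; by $(V_2)$ and the $T$-periodicity of $L$ the matrix $L(t)$ is uniformly positive definite, so $\|q\|^2:=\int_\R\bigl(|\dot q|^2+(L(t)q,q)\bigr)\,dt$ is a norm on $E$ equivalent to the usual one, and the natural functional is $I(q)=\tfrac12\|q\|^2-\int_\R W(t,q)\,dt$. A routine computation shows the critical points of $I$ are exactly the solutions of \eqref{e1} lying in $W^{1,2}(\R,\R^n)$, and any such $q$ automatically satisfies $q(t)\to0$ and $\dot q(t)\to0$ as $|t|\to\infty$, i.e. is homoclinic to $0$. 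One cannot minimize or directly apply a min-max on $E$: by the super-quadratic growth $(V_3)$ the functional $I$ is unbounded below, and since \eqref{e1} is invariant under integer translations in $t$ the Palais--Smale condition fails on $E$.

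To circumvent both difficulties, for each $k\in\mathbf N$ I would pass to the space $E_k$ of $2kT$-periodic functions in $W^{1,2}_{\mathrm{loc}}$, writing $q(t)=\sum_{j\in\mathbf Z}c_j e^{i\pi jt/(kT)}$. Parseval's identity expresses $\|q\|_k^2$ in terms of $\sum_j(1+j^2)|c_j|^2$, and the Cauchy--Schwarz tail bound $\sum_{|j|>N}|c_j|\le\bigl(\sum_j(1+j^2)|c_j|^2\bigr)^{1/2}\bigl(\sum_{|j|>N}(1+j^2)^{-1}\bigr)^{1/2}$ shows $E_k\hookrightarrow C(\R/2kT\mathbf Z)$ compactly. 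Consequently the truncated functional $I_k(q)=\tfrac12\|q\|_k^2-\int_{-kT}^{kT}W(t,q)\,dt$ satisfies the Palais--Smale condition on $E_k$; using $(V_3)$ (which gives $W(t,q)\ge a_1|q|^\mu$ for $|q|\ge1$) and $(V_4)$ (which gives $W_q(t,q)=o(|q|)$, hence $W(t,q)=o(|q|^2)$, near $q=0$) one verifies the Mountain Pass geometry of $I_k$, or, equivalently, that the Nehari manifold $\mathcal N_k=\{q\in E_k\setminus\{0\}:\langle I_k'(q),q\rangle=0\}$ is nonempty, that $I_k$ is bounded below on it, and that the direct method yields a minimizer; either way one gets a $2kT$-periodic classical solution $q_k$ of \eqref{e1}. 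Keeping a fixed test function $e\in E_1\subset E_k$ furnishes a $k$-independent upper bound $I_k(q_k)\le M$; then, since $\langle I_k'(q_k),q_k\rangle=0$ and $(V_3)$ gives $\tfrac1\mu(q,W_q(t,q))-W(t,q)\ge0$, one obtains $(\tfrac12-\tfrac1\mu)\|q_k\|_k^2\le I_k(q_k)\le M$, so $\|q_k\|_k\le C$ and, by the Sobolev inequality, $\|q_k\|_{L^\infty(\R)}\le C$, both uniform in $k$.

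Finally I would extract a subsequence with $q_k\rightharpoonup q$ in $W^{1,2}_{\mathrm{loc}}$ and $q_k\to q$ in $C_{\mathrm{loc}}$; passing to the limit in the equation shows $q$ solves \eqref{e1}, and passing to the limit in the uniform bound $\int_{-mT}^{mT}\bigl(|\dot q_k|^2+(L(t)q_k,q_k)\bigr)\,dt\le C$ (valid for every fixed $m$ and all $k\ge m$) gives $q\in W^{1,2}(\R,\R^n)$, hence $q$ is homoclinic to $0$. The crux of the argument, and the step I expect to be the main obstacle, is to rule out $q\equiv0$: one first shows $\inf_k\|q_k\|_{L^\infty}\ge\beta>0$, for otherwise $\|q_k\|_{L^\infty}\to0$ combined with $\langle I_k'(q_k),q_k\rangle=0$ and the small-$|q|$ estimate from $(V_4)$ would force $\|q_k\|_k\to0$, contradicting the positive lower bound $I_k(q_k)\ge\alpha>0$ of the mountain-pass (or Nehari) level; one then uses that $V$ is $T$-periodic while $q_k$ is $2kT$-periodic to translate $q_k$ by a suitable integer multiple of $T$ so that $|q_k|$ attains its maximum on $[0,T]$, which restores enough compactness to conclude $\|q\|_{C([0,T])}\ge\beta>0$. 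Hence $q$ is a nontrivial homoclinic orbit of \eqref{e1} in $W^{1,2}(\R,\R^n)$, as required.
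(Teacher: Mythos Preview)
The paper does not contain a proof of this theorem at all: Theorem~\ref{th1} is quoted as a prior result of Rabinowitz~[36], and the only proofs in the paper concern the authors' own Theorem~1.2 (the sub-quadratic case treated by the direct method). So there is no ``paper's own proof'' to compare against; the paper merely records that Rabinowitz ``used Mountain Pass Lemma and approximation arguments of periodic solutions'' to obtain it.

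Your proposal is essentially a faithful reconstruction of that Rabinowitz scheme: pass to the $2kT$-periodic problem on $E_k$, obtain a nontrivial critical point $q_k$ via the mountain-pass geometry (using $(V_3)$ for super-quadratic growth and $(V_4)$ for the local behaviour near $0$), derive the $k$-independent bound $(\tfrac12-\tfrac1\mu)\|q_k\|_k^2\le I_k(q_k)\le M$ from $\langle I_k'(q_k),q_k\rangle=0$, and then extract a $W^{1,2}_{\mathrm{loc}}$ limit, using $T$-periodicity to translate so that the limit is nontrivial. This is exactly the strategy the paper attributes to~[36], so your outline is on target; the Fourier-series bookkeeping you add on $E_k$ is a cosmetic variant of the standard compact embedding $H^1_{2kT}\hookrightarrow C$ and does not change the substance. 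In short: your approach is correct and coincides with the method the paper cites, but the paper itself provides no argument for you to be checked against.
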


Different from earlier papers, we use Fourier analysis and direct variational method to study the existence of homoclinic orbits for sub-quadratic second order Hamiltonian system, we have the following new theorem:

\begin{theorem}\label{th1.2}
Suppose $V(t,q)=-\frac{1}{2}|q|^2+a(t)|q|^{\alpha}$, $1<\alpha<2$ and $a(t)$ satisfies

\begin{enumerate}
\item[($a_1$):] $a(t)\in C^0(R,R^+);$
\item[($a_2$):] $a(t)\in L^1(R)\bigcap L^2(R).$
\end{enumerate}
Then (\ref{e1}) has at least one non-zero homoclinic orbit $q(t)$ with $q(\pm\infty)=0$ and $\dot{q}(\pm\infty)=0$ .
\end{theorem}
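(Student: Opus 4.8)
The plan is to obtain the homoclinic orbit as a global minimizer of the natural action functional on the Hilbert space $E := W^{1,2}(\mathbb{R},\mathbb{R}^n)$, equipped with the inner product $\langle u,v\rangle = \int_{\mathbb{R}}(\dot u\cdot\dot v + u\cdot v)\,dt$ and the associated norm $\|\cdot\|$. Since the quadratic part of $-V$ is exactly $\tfrac12|q|^2$, the linear operator $-\tfrac{d^2}{dt^2}+1$ has Fourier symbol $1+\xi^2\ge 1$, so $\|\cdot\|$ is precisely the energy norm and $E$ embeds continuously into $L^\infty(\mathbb{R},\mathbb{R}^n)$, indeed into $C_0(\mathbb{R},\mathbb{R}^n)$. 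First I would set
\[
I(q) \;=\; \int_{\mathbb{R}}\Big(\tfrac12|\dot q|^2 - V(t,q)\Big)\,dt \;=\; \tfrac12\|q\|^2 \;-\; \int_{\mathbb{R}} a(t)\,|q|^{\alpha}\,dt ,
\]
and check, using $(a_1)$--$(a_2)$ and the Sobolev embedding, that $I\in C^1(E,\mathbb{R})$ with $\langle I'(q),v\rangle = \langle q,v\rangle - \alpha\int_{\mathbb{R}} a(t)\,|q|^{\alpha-2}q\cdot v\,dt$, so that critical points of $I$ are weak solutions of \eqref{e1}; since $\alpha>1$, the map $q\mapsto \alpha a(t)|q|^{\alpha-2}q$ is continuous, and a standard bootstrap makes such solutions classical.

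Next I would prove coercivity. By the embedding $|q|_\infty\le C\|q\|$ and the $L^1$ part of $(a_2)$,
\[
\int_{\mathbb{R}} a(t)\,|q|^{\alpha}\,dt \;\le\; |q|_\infty^{\alpha}\,\|a\|_{L^1} \;\le\; C^{\alpha}\,\|a\|_{L^1}\,\|q\|^{\alpha},
\]
and since $1<\alpha<2$ the quadratic term dominates, giving $I(q)\ge \tfrac12\|q\|^2 - C'\|q\|^{\alpha}\to+\infty$ as $\|q\|\to\infty$. Hence $I$ is bounded below and every minimizing sequence is bounded in the reflexive space $E$.

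The hard part will be the weak lower semicontinuity of $I$, because $W^{1,2}(\mathbb{R})$ is not compactly embedded in $L^p(\mathbb{R})$. The quadratic part of $I$ is convex and continuous, hence weakly lsc; the issue is the functional $q\mapsto \int_{\mathbb{R}} a(t)|q|^{\alpha}\,dt$, which I would show is in fact weakly continuous. If $q_k\rightharpoonup q$ in $E$, then $M:=\sup_k|q_k|_\infty<\infty$, and on each bounded interval $[-R,R]$ the restriction map $E\to C([-R,R])$ is compact, so along a subsequence $q_k\to q$ uniformly on $[-R,R]$. Splitting $\int_{\mathbb{R}} = \int_{|t|\le R} + \int_{|t|>R}$, the tail is bounded by $2M^{\alpha}\int_{|t|>R}a(t)\,dt$, which is small for large $R$ since $a\in L^1(\mathbb{R})$, while for fixed $R$ the bulk term tends to $0$ by uniform convergence and continuity of $s\mapsto|s|^{\alpha}$. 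This truncation step — combined with the Fourier/Plancherel description of the energy norm — is exactly where $(a_2)$ is used to recover the compactness lost on the line. With weak lower semicontinuity in hand, the direct method yields a minimizer $q_0\in E$ of $I$.

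Finally I would rule out triviality and extract the decay. Because $a(t)>0$, for any fixed $\phi\in E\setminus\{0\}$ one has $I(s\phi)=\tfrac{s^2}{2}\|\phi\|^2 - s^{\alpha}\int_{\mathbb{R}} a(t)|\phi|^{\alpha}\,dt$, which is negative for $s>0$ small since $\alpha<2$; hence $\inf_E I<0=I(0)$, so $q_0\ne 0$. Thus $q_0$ is a nontrivial classical solution of \eqref{e1}. From $q_0\in W^{1,2}(\mathbb{R},\mathbb{R}^n)\hookrightarrow C_0$ we get $q_0(\pm\infty)=0$. Moreover, from $\ddot q_0 = q_0 - \alpha a(t)|q_0|^{\alpha-2}q_0$, the $L^2$ part of $(a_2)$ and the boundedness of $q_0$ give $\ddot q_0\in L^2(\mathbb{R})$, so $q_0\in W^{2,2}(\mathbb{R},\mathbb{R}^n)$; then $\dot q_0\in W^{1,2}\hookrightarrow C_0$, whence $\dot q_0(\pm\infty)=0$, which completes the proof.
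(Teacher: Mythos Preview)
Your proposal is correct and follows essentially the same route as the paper: the direct method on $H^1(\mathbb{R},\mathbb{R}^n)$, coercivity via the Sobolev embedding and $a\in L^1$, weak lower semicontinuity by splitting into a compact bulk on $[-R,R]$ and an $L^1$-controlled tail, nontriviality from $I(s\phi)<0$ for small $s$ since $\alpha<2$, and the decay $\dot q(\pm\infty)=0$ from the equation together with $a\in L^2$ to get $\ddot q\in L^2$. The only cosmetic difference is that the paper explicitly computes the embedding constant $\pi^{1/2}$ via the Fourier transform, whereas you invoke it abstractly.
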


\section{Some Lemmas}

 Firstly,let us introduce some notations:\\
$H^1(R,R^n)=W^{1,2}(R,R^n)$ with the norm $\|q\|_{H^1}=(\int_{-\infty}^{+\infty}(|q|^2+|\dot{q}|^2)dt)^{1/2}$;\\
$\hat{f}(t)=\int_{-\infty}^{+\infty}f(x)e^{-2\pi ixt}dx$ is the Fourier transform of $f(x)$;\\
$\breve{g}(t)=\int_{-\infty}^{+\infty}g(x)e^{2\pi ixt}dx$ is the Fourier inverse transform of $g(x)$;\\
$I(q)=\frac{1}{2}\int_{-\infty}^{+\infty}(|q|^2+|\dot{q}|^2)dt-\int_{-\infty}^{+\infty}a(t)|q|^{\alpha} dt$ is the functional corresponding to the system (\ref{e1}) with $V(t,q)=-\frac{1}{2}|q|^2+a(t)|q|^{\alpha}$, $1<\alpha<2$, and $q\in H^1(R,R^n)$.
Now, we list some  Lemmas which are necessary for the proof of Theorem 1.2:

\begin{lemma}
Under the conditions of Theorem 1.2, for any given $q\in W^{1,2}(R,R^n)$,we
have that $-\infty<I(q)<+\infty$,that is,$I(q)$ is well defined.
\end{lemma}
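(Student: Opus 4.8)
The plan is to split $I(q) = \frac{1}{2}\int_{-\infty}^{+\infty}(|q|^2+|\dot q|^2)\,dt - \int_{-\infty}^{+\infty}a(t)|q|^\alpha\,dt =: Q(q) - N(q)$ and bound the two pieces separately. The quadratic part is trivial: $Q(q) = \frac{1}{2}\|q\|_{H^1}^2 < +\infty$ directly from $q \in W^{1,2}(R,R^n)$. For the nonlinear part, hypothesis $(a_1)$ gives $a(t) \ge 0$, so $a(t)|q(t)|^\alpha \ge 0$ and $N(q)$ is a well-defined element of $[0,+\infty]$; hence $I(q) = Q(q) - N(q) \le Q(q) < +\infty$ with no further work, which already gives the upper bound.

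The substantive point is finiteness of $N(q)$, i.e. the lower bound on $I(q)$. Here I would invoke the one-dimensional Sobolev embedding $H^1(R,R^n) \hookrightarrow L^\infty(R,R^n)$: for $q \in H^1(R,R^n)$, writing $|q(t)|^2 = 2\int_{-\infty}^t (q(s),\dot q(s))\,ds$ and applying Cauchy--Schwarz gives $|q(t)|^2 \le 2\|q\|_{L^2}\|\dot q\|_{L^2} \le \|q\|_{H^1}^2$, so $\|q\|_{L^\infty} \le \|q\|_{H^1}$. (If one prefers to stay within the Fourier framework advertised in the paper, the same follows from $\|q\|_{L^\infty} \le \|\hat q\|_{L^1}$ together with $\|\hat q\|_{L^1} \le \|(1+|\cdot|^2)^{-1/2}\|_{L^2}\,\|(1+|\cdot|^2)^{1/2}\hat q\|_{L^2} \lesssim \|q\|_{H^1}$.) Then $|q(t)|^\alpha \le \|q\|_{L^\infty}^\alpha$ pointwise, so by the $L^1$-part of $(a_2)$,
\[
0 \le N(q) \le \|q\|_{L^\infty}^\alpha \int_{-\infty}^{+\infty} a(t)\,dt = \|q\|_{L^\infty}^\alpha \|a\|_{L^1} < +\infty ,
\]
and combining the two estimates, $\frac{1}{2}\|q\|_{H^1}^2 - \|q\|_{H^1}^\alpha \|a\|_{L^1} \le I(q) \le \frac{1}{2}\|q\|_{H^1}^2$, which proves $-\infty < I(q) < +\infty$.

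There is no genuine obstacle here; the only things deserving care are recording the nonnegativity of $a$ --- which is exactly what turns ``$I(q) > -\infty$'' into a one-sided estimate not requiring control of a signed integral --- and citing the Sobolev embedding with an explicit constant. As a remark, one could avoid using the $L^1$ part of $(a_2)$: since $q \in L^2 \cap L^\infty$ forces $|q|^\alpha \in L^2$ (because $\int_{-\infty}^{+\infty}|q|^{2\alpha}\,dt \le \|q\|_{L^\infty}^{2\alpha-2}\|q\|_{L^2}^2$ and $2\alpha - 2 > 0$), Cauchy--Schwarz gives $N(q) \le \|a\|_{L^2}\,\| |q|^\alpha \|_{L^2} < +\infty$ as well; the $L^1$ route is simply the shortest.
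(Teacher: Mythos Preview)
Your proof is correct and follows essentially the same route as the paper: establish the Sobolev embedding $H^1(R,R^n)\hookrightarrow L^\infty(R,R^n)$, then bound $\int a(t)|q|^{\alpha}\,dt\le \|q\|_{L^\infty}^{\alpha}\|a\|_{L^1}<+\infty$. The paper's only difference is that it takes precisely the Fourier route you mention parenthetically, obtaining the explicit constant $\|q\|_{L^\infty}\le \pi^{1/2}\|q\|_{H^1}$ (which it reuses later for coercivity); your additional remarks on the $L^2$ alternative and the elementary Cauchy--Schwarz derivation of the embedding are extra detail not in the paper.
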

\begin{proof}
Since
\begin{eqnarray*}
|q|&=&|\check{\hat{q}}|\leq\int_{-\infty}^{+\infty}|\hat{q}|dt\nonumber\\
&\leq& (\int_{-\infty}^{+\infty}\frac{1}{1+t^2})^{\frac{1}{2}}(\int_{-\infty}^{+\infty}(1+t^2)|\hat{q}(t)|dt)^{\frac{1}{2}}\nonumber\\
&=&\pi^{\frac{1}{2}}\|q\|_{H^1},
\end{eqnarray*}
Hence we have embedding relation $H^{1,2}(R,R^n)\subset C^0(R,R^n)$.
For any given $q\in H^{1,2}(R,R^n)$, we have
$$0\leq\int_{-\infty}^{+\infty}a(t)|q|^{\alpha} dt\leq\|q\|_{L^{\infty}}^{\alpha}\int_{-\infty}^{+\infty}a(t)dt<+\infty,$$
which implies
$$-\infty<I(q)=\frac{1}{2}\int_{-\infty}^{+\infty}(|q|^2+|\dot{q}|^2)dt-\int_{-\infty}^{+\infty}a(t)|q|^{\alpha}dt<+\infty.$$
So $I(q)$ is well-defined.
\end{proof}

\begin{lemma}
Suppose that the conditions of Theorem 1.2 hold, then we have
\begin{eqnarray*}
I^{'}(q)v=\int_{-\infty}^{+\infty}((q,v)+(\dot{q},\dot{v}))dt-\alpha\int_{-\infty}^{+\infty}a(t)|q|^{\alpha-2}(q,v)
dt
\end{eqnarray*}
for all $q$, $v\in H^{1}$, which implies that
\begin{eqnarray*}
I^{'}(q)q=\int_{-\infty}^{+\infty}(|q|^2+|\dot{q}|^2)dt-\alpha\int_{-\infty}^{+\infty}a(t)|q|^{\alpha}dt,
\end{eqnarray*}
and $I'(q)$ is continuous.
\begin{proof}
 Set
\begin{eqnarray*}
I_{1}(q)=\frac{1}{2}\int_{-\infty}^{+\infty}(|q|^2+|\dot{q}|^2)dt\ \
\ \mbox{and}\ \ \
I_{2}(q)=\int_{-\infty}^{+\infty}a(t)|q|^{\alpha}dt.
\end{eqnarray*}
Then we can see that $I(q)=I_{1}(q)-I_{2}(q)$. By the property of
norm, we conclude that $I_{1}\in C^{1}(H^{1},R)$ and
\begin{eqnarray*}
I_{1}^{'}(q)v=\int_{-\infty}^{+\infty}((q,v)+(\dot{q},\dot{v}))dt
\end{eqnarray*}

Now we prove $I_2$ is Frechet differentiable on $H^{1,2}$,the proof used some
ideas of Coti Zelati-Rabinowitz[26].
Let $W(t,q)=a(t)|q|^{\alpha}$
Since $|W_q(t,x)|=\alpha a(t)|x|^{\alpha-1},$
so for any $\epsilon >0$,there exists $\rho>0$ such that when $|x|\leq\rho$,we have
$$|W_q(t,x)|\leq \epsilon a(t).$$
It is well known [33] that for any finite R,
$$\int_{-R}^RW(t,q)dt\in C^1(W^{1,2}([-R,R],R^n);R).$$
So there is $\delta<\min(\frac{\rho}{4},1)$ such that
for $\phi\in W^{1,2}(R,R^n)$ and $||\phi||\leq\delta$,we have
$$|\int_{-R}^R(W(t,q+\phi)-W(t,q)-W_q(t,q)\phi)dt|\leq\frac{\epsilon}{4}||\phi||.$$
Since $q\in W^{1,2}(R,R^n)$,so $q(t)\rightarrow 0$ as $t\rightarrow\pm\infty$,
and we can choose R so large so that for $|t|\geq R$ ,we have
$$|q(t)|\leq\frac{\rho}{4}.$$
For $\phi\in W^{1,2}(R,R^n)$,by [36] we have
$$||\phi||_{L^{\infty}}\leq 2^{1/2}||\phi||\leq\rho/2.$$
By Mean Value Theorem,there exists $\xi=q+\theta\phi$ such that
$$W(t,q+\phi)-W(t,q)=W_q(t,\xi)\phi$$
Since for $|t|\geq R$,we have
$$|\xi|\leq\frac{\rho}{4}+\frac{\rho}{2}<\rho.$$
So for $|t|\geq R$,we have
$$|W(t,q+\phi)-W(t,q)|=|W_q(t,\xi)\phi|\leq\epsilon\phi a(t).$$
Hence
$$\int_{|t|>R}|W(t,q+\phi)-W(t,q)|dt\leq \int_{|t|>R}\epsilon\phi a(t)dt$$
$$\leq ||\phi||_{L^{\infty}}\epsilon\int_{-\infty}^{+\infty}a(t)dt$$
$$\leq 2\epsilon\int_{-\infty}^{+\infty}a(t)dt||\phi||$$
Similarly, we also have
$$\int_{|t|>R}|W_q(t,q)\phi|dt\leq 2\epsilon\int_{-\infty}^{+\infty}a(t)dt||\phi||.$$
Hence $I_2$ is Frechet differentiable.\\
Now we prove $I_2$ is continuous.
Suppose$q_m\rightarrow q$ in $W^{1,2}(R,R^n)$ .Since
$$\sup_{||\phi||=1}|\int_{-\infty}^{+\infty}(W_q(t,q_m)-W_q(t,q))\phi|
\leq(\int_{-\infty}^{+\infty}|W_q(t,q_m)-W_q(t,q)|^2dt)^{1/2}.$$
For any given $\epsilon>0$,we can choose $R>0$ large enough so that
$|t|\geq R$ implies $|q(t)|\leq\rho,|q_m(t)|\leq\rho$ for m large and
$$|W_q(t,q)|\leq\epsilon a(t),$$

$$|W_q(t,q_m)|\leq\epsilon a(t).$$
Hence
$$\int_{-\infty}^{+\infty}|W_q(t,q_m)-W_q(t,q)|^2dt$$
$$\leq\int_{-R}^{R}|W_q(t,q_m)-W_q(t,q)|^2dt+2\epsilon^2\int_{-\infty}^{+\infty}|a(t)|^2dt.$$
Hence $I'_2$ is continuous.

\end{proof}
\end{lemma}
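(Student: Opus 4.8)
The plan is to exploit the splitting $I=I_1-I_2$ already introduced, where $I_1(q)=\frac12\int_{-\infty}^{+\infty}(|q|^2+|\dot q|^2)\,dt$ is exactly $\frac12\|q\|_{H^1}^2$ and $I_2(q)=\int_{-\infty}^{+\infty}a(t)|q|^{\alpha}\,dt$. Since $I_1$ is the quadratic form attached to the Hilbert inner product on $H^1$, it is of class $C^1$ (indeed smooth) with $I_1'(q)v=\int_{-\infty}^{+\infty}((q,v)+(\dot q,\dot v))\,dt$, and $I_1'$ is linear in $q$, hence trivially continuous. Thus the whole statement reduces to proving that $I_2$ is Fr\'echet differentiable with $I_2'(q)v=\alpha\int_{-\infty}^{+\infty}a(t)|q|^{\alpha-2}(q,v)\,dt$ and that the associated Nemytskii map $q\mapsto W_q(\cdot,q)$, where $W(t,q)=a(t)|q|^{\alpha}$ and $W_q(t,q)=\alpha a(t)|q|^{\alpha-2}q$, is continuous from $H^1$ into $L^2$; subtracting the two derivatives then yields the claimed formula for $I'(q)v$, and putting $v=q$ (so that $|q|^{\alpha-2}(q,q)=|q|^{\alpha}$) gives the expression for $I'(q)q$.

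For the differentiability of $I_2$ I would show directly that $I_2(q+\phi)-I_2(q)-\int_{-\infty}^{+\infty}W_q(t,q)\phi\,dt=o(\|\phi\|_{H^1})$ as $\|\phi\|_{H^1}\to0$. The continuous embedding $H^1(R,R^n)\subset C^0(R,R^n)$ from Lemma 2.1 (giving $\|\phi\|_{L^\infty}\le C\|\phi\|_{H^1}$) lets me control pointwise sizes. I would split the real line into a compact part $|t|\le R$ and a tail $|t|>R$. On $[-R,R]$ the classical fact that $q\mapsto\int_{-R}^{R}W(t,q)\,dt$ is $C^1$ on $W^{1,2}([-R,R],R^n)$ (see [33]) gives a bound of order $\e\|\phi\|$. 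On the tail I would use that $q\in H^1$ forces $q(t)\to0$ as $t\to\pm\infty$, together with the decisive observation $|W_q(t,x)|=\alpha a(t)|x|^{\alpha-1}\to0$ as $x\to0$ (because $\alpha>1$): choosing $R$ large makes $|q(t)|$, and hence $|\xi|=|q+\theta\phi|$ after a mean value expansion, small, so $|W_q(t,\xi)|\le\e a(t)$. Since $a\in L^1$, both $\int_{|t|>R}|W(t,q+\phi)-W(t,q)|\,dt$ and $\int_{|t|>R}|W_q(t,q)\phi|\,dt$ are bounded by a constant times $\e\|a\|_{L^1}\|\phi\|$, which closes the estimate.

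For continuity of $I'$ only $I_2'$ requires work. Taking $q_m\to q$ in $H^1$, I would bound the operator norm by a single $L^2$ quantity, using $\int_{-\infty}^{+\infty}|\phi|^2\,dt\le\|\phi\|_{H^1}^2$ together with Cauchy--Schwarz:
$$\sup_{\|\phi\|=1}\Big|\int_{-\infty}^{+\infty}(W_q(t,q_m)-W_q(t,q))\phi\,dt\Big|\le\Big(\int_{-\infty}^{+\infty}|W_q(t,q_m)-W_q(t,q)|^2\,dt\Big)^{1/2}.$$
Again I would split at $|t|=R$. For $|t|\ge R$, the smallness of $W_q$ near the origin gives $|W_q(t,q)|\le\e a(t)$ and $|W_q(t,q_m)|\le\e a(t)$ for $m$ large, so the tail of the integrand is dominated by $C\e^2 a(t)^2$; here the second hypothesis $a\in L^2$ is exactly what makes $\int_{|t|>R}C\e^2 a(t)^2\,dt$ small. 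On $[-R,R]$ the $C^0$-embedding gives $q_m\to q$ uniformly, $a$ is bounded on the compact interval, and the continuity of $x\mapsto|x|^{\alpha-2}x$ (extended by $0$ at the origin) yields $\int_{-R}^{R}|W_q(t,q_m)-W_q(t,q)|^2\,dt\to0$.

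The point needing the most care is the apparent singularity of $|q|^{\alpha-2}$ at $q=0$, where $\alpha-2<0$. The whole argument hinges on the fact that, because $\alpha>1$, the gradient $W_q(t,q)=\alpha a(t)|q|^{\alpha-2}q$ nonetheless vanishes as $q\to0$ (its modulus is $\alpha a(t)|q|^{\alpha-1}$), so $W_q$ extends continuously by $0$ at the origin and admits the smallness bound $|W_q(t,x)|\le\e a(t)$ for $|x|$ small. This smallness is what tames the behavior near zero and the tails at infinity simultaneously, while the two integrability assumptions play complementary roles: $a\in L^1$ drives the differentiability estimate and $a\in L^2$ drives the continuity estimate.
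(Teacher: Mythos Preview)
Your proposal is correct and follows essentially the same route as the paper's own proof: the same splitting $I=I_1-I_2$, the same key smallness bound $|W_q(t,x)|=\alpha a(t)|x|^{\alpha-1}\le\varepsilon a(t)$ for $|x|$ small, the same compact/tail decomposition using the classical $C^1$ result on $[-R,R]$ together with the mean value theorem and $a\in L^1$ for differentiability, and the same Cauchy--Schwarz reduction to an $L^2$ estimate with $a\in L^2$ controlling the tails for continuity of $I_2'$. Your added remarks on the continuous extension of $|q|^{\alpha-2}q$ at the origin and on the complementary roles of the $L^1$ and $L^2$ hypotheses are implicit in the paper and make the argument clearer.
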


\begin{lemma}\label{l1.2}([47])
Let $X$ be a reflexive Banach space, $M\subset X$ be a weakly closed subset, $f: M\rightarrow R$ be weakly lower semi-continuous. If $f$ is coercive, that is, $f(x)\rightarrow+\infty$ as $\|x\|\rightarrow+\infty$, then $f$ attains its infimum on $M$.
\end{lemma}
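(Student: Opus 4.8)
The statement is the classical Weierstrass-type theorem underlying the direct method of the calculus of variations, quoted here from [47]; the plan is simply to recall its short proof. Assume $M\neq\emptyset$ (otherwise the statement is vacuous), and set $m:=\inf_{x\in M}f(x)\in[-\infty,+\infty)$. First I would choose a minimizing sequence $\{x_n\}\subset M$ with $f(x_n)\to m$ as $n\to\infty$.

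The first step is to show that $\{x_n\}$ is bounded in $X$. If it were not, then along some subsequence $\|x_{n_k}\|\to+\infty$, and the coercivity hypothesis would force $f(x_{n_k})\to+\infty$, contradicting $f(x_{n_k})\to m$ (here $m<+\infty$ since $M$ is nonempty). Hence $\{x_n\}$ is bounded. The second step is to pass to a weak limit: since $X$ is reflexive, every bounded sequence in $X$ has a weakly convergent subsequence, so after relabeling we may assume $x_n\rightharpoonup x_0$ for some $x_0\in X$; and since $M$ is weakly closed, $x_0\in M$.

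The final step combines weak lower semicontinuity with the definition of $m$: on the one hand $f(x_0)\le\liminf_{n\to\infty}f(x_n)=m$, and on the other hand $x_0\in M$ gives $f(x_0)\ge m$. Therefore $f(x_0)=m$, which in particular shows $m>-\infty$, and $f$ attains its infimum on $M$ at the point $x_0$. The only place requiring genuine care — and the single obstacle in an otherwise routine argument — is the boundedness of the minimizing sequence, for which coercivity is indispensable; reflexivity and weak closedness then do the rest automatically.
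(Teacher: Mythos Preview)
Your argument is correct and is exactly the standard direct-method proof of this Weierstrass-type theorem. The paper does not supply its own proof of this lemma at all; it simply quotes the result from reference~[47], so there is nothing to compare against beyond noting that your write-up is the expected textbook argument.
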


\begin{lemma}\label{l2.2}
$I(q)=\frac{1}{2}\int_{-\infty}^{+\infty}(|q|^2+|\dot{q}|^2)dt-\int_{-\infty}^{+\infty}a(t)|q|^{\alpha} dt$ is coercive for $q(t)\in H^1(R,R^n)$ and $I(q)$ is bounded from below.
\end{lemma}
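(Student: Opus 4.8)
The plan is to deduce both statements from the Sobolev-type embedding established in Lemma 2.1, namely $\|q\|_{L^\infty}\le \pi^{1/2}\|q\|_{H^1}$ for every $q\in H^1(R,R^n)$, together with hypothesis $(a_2)$ (in fact only $a\in L^1(R)$ is needed here). First I would estimate the nonlinear term: since $a\ge 0$ by $(a_1)$,
$$0\le \int_{-\infty}^{+\infty} a(t)|q|^{\alpha}\,dt\le \|q\|_{L^\infty}^{\alpha}\int_{-\infty}^{+\infty} a(t)\,dt\le \pi^{\alpha/2}\Big(\int_{-\infty}^{+\infty}a(t)\,dt\Big)\|q\|_{H^1}^{\alpha}.$$
Writing $C:=\pi^{\alpha/2}\int_{-\infty}^{+\infty}a(t)\,dt<+\infty$, this yields the lower bound $I(q)\ge \tfrac12\|q\|_{H^1}^2-C\|q\|_{H^1}^{\alpha}$ valid for all $q\in H^1(R,R^n)$.

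Next I would analyse the scalar function $g(s):=\tfrac12 s^2-Cs^{\alpha}$ on $[0,+\infty)$. Since $1<\alpha<2$ one can factor $g(s)=s^{\alpha}\big(\tfrac12 s^{2-\alpha}-C\big)$, and because $2-\alpha>0$ the bracket tends to $+\infty$ as $s\to+\infty$; hence $g(s)\to+\infty$ as $s\to+\infty$, and consequently $I(q)\to+\infty$ as $\|q\|_{H^1}\to+\infty$, i.e.\ $I$ is coercive. For boundedness from below, note $g$ is continuous on $[0,+\infty)$ and $g(s)\to+\infty$ as $s\to+\infty$, so $g$ attains a finite global minimum; setting $-m:=\inf_{s\ge 0}g(s)>-\infty$ we get $I(q)\ge g(\|q\|_{H^1})\ge -m$ for every $q\in H^1(R,R^n)$.

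There is essentially no serious obstacle here; the only point requiring care is that the exponent in the perturbation is strictly below $2$, so that the quadratic part of the $H^1$-norm dominates the nonlinear term for large $q$. The decomposition $g(s)=s^{\alpha}\big(\tfrac12 s^{2-\alpha}-C\big)$ makes this transparent, and one could equivalently invoke Young's inequality to absorb $C\|q\|_{H^1}^{\alpha}$ into $\tfrac14\|q\|_{H^1}^2$ plus a constant depending only on $\alpha$ and $C$. All remaining steps are elementary.
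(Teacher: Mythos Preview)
Your proof is correct and follows essentially the same approach as the paper: both use the embedding $\|q\|_{L^\infty}\le\pi^{1/2}\|q\|_{H^1}$ from Lemma~2.1 together with $a\in L^1$ to obtain $I(q)\ge \tfrac12\|q\|_{H^1}^2 - C\|q\|_{H^1}^{\alpha}$ with $C=\pi^{\alpha/2}\int a$, and then analyse the scalar function $\phi(s)=\tfrac12 s^2 - Cs^{\alpha}$ on $[0,\infty)$. The only cosmetic difference is that the paper locates the minimiser of $\phi$ explicitly via $\phi'(s)=0$, whereas you argue by the factorisation $\phi(s)=s^{\alpha}(\tfrac12 s^{2-\alpha}-C)$ and continuity; both are equally valid.
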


\begin{proof}
\begin{eqnarray*}
I(q)&=&\frac{1}{2}\int_{-\infty}^{+\infty}(|q|^2+|\dot{q}|^2)dt-\int_{-\infty}^{+\infty}a(t)|q|^{\alpha} dt\\
&\geq& \frac{1}{2}\|q\|^2_{H^1}-\pi^{\frac{\alpha}{2}}\int_{-\infty}^{+\infty}a(t)dt\|q\|^{\alpha} _{H^1}.
\end{eqnarray*}
We notice that $1<\alpha<2$, so $I(q)\rightarrow+\infty \ \ \text{when}\ \ \|q\|_{H^1}\rightarrow+\infty$, hence $I(q)$ is coercive on $H^1$.

Let  $\phi(x)=\frac{1}{2}x^2-Cx^{\alpha}$, $C> 0$, $x\in [0,+\infty)$, then the derivative of $\phi$ is
$$\phi'(x)=x-C\alpha x^{\alpha-1}.$$
when $x=(C\alpha)^{\frac{1}{2-\alpha}}$, $\phi(x)$ attains its minimum, that is, when $\|q\|_{H^1}=(C\alpha)^{\frac{1}{2-\alpha}}$, $\frac{1}{2}\|q\|^2_{H^1}-\pi^{\frac{\alpha}{2}}\int_{-\infty}^{+\infty}a(t)dt\|q\|^{\alpha}_{H^1}$ attains its minimum, where $C= \pi^{\frac{\alpha}{2}}\int_{-\infty}^{+\infty}a(t)dt$. Then $I(q)$ is bounded from below.
\end{proof}

\begin{lemma}\label{12.3}(Sobolev-Rellich-Kondrachov [47]) Let $\Omega$ is bounded domain, then
$$W^{1,2}(\Omega,R)\subset C(\Omega,R)$$
and the embedding is compact.
\end{lemma}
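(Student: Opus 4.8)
The plan is to reduce to the one-dimensional situation, where a bounded domain $\Omega\subset\R$ may be taken to be a bounded open interval $(a,b)$, and then to combine the elementary Morrey-type estimate (essentially the computation already performed in Lemma 2.1, but now on a finite interval, so that no Fourier analysis is even needed) with the Arzel\`a--Ascoli theorem.

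First I would establish the \emph{continuous} embedding $W^{1,2}(a,b)\subset C([a,b])$. Given $u\in W^{1,2}(a,b)$, its weak derivative $u'$ lies in $L^2(a,b)\subset L^1(a,b)$ because $(a,b)$ is bounded, so $u$ agrees a.e.\ with an absolutely continuous function (still denoted $u$) obeying $u(x)-u(y)=\int_y^x u'(s)\,ds$. By the Cauchy--Schwarz inequality,
\[
|u(x)-u(y)|\le |x-y|^{1/2}\Big(\int_a^b|u'|^2\,ds\Big)^{1/2}\le |x-y|^{1/2}\,\|u\|_{W^{1,2}},
\]
so $u$ is $\tfrac12$-H\"older continuous with constant controlled by $\|u\|_{W^{1,2}}$. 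Picking, via the mean value theorem for integrals, a point $x_0\in[a,b]$ with $|u(x_0)|\le (b-a)^{-1/2}\|u\|_{L^2}$ and integrating from $x_0$ yields $\|u\|_{C([a,b])}\le C(a,b)\,\|u\|_{W^{1,2}}$, which is the asserted bounded embedding.

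Next I would prove compactness. Let $\{u_m\}$ be bounded in $W^{1,2}(a,b)$, say $\|u_m\|_{W^{1,2}}\le M$ for all $m$. By the estimates above $\{u_m\}$ is uniformly bounded in $C([a,b])$ and, being uniformly $\tfrac12$-H\"older with constant $M$, equicontinuous; the Arzel\`a--Ascoli theorem then produces a subsequence converging uniformly on $[a,b]$, i.e.\ in $C([a,b])$. Hence $W^{1,2}(a,b)\hookrightarrow C([a,b])$ is a compact embedding. For a general bounded domain $\Omega\subset\R$ one writes $\Omega$ as an at most countable disjoint union of open intervals and argues componentwise (or invokes an extension operator onto a surrounding interval); the one-dimensional interval case above carries all the content.

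The step that needs the most care is the passage from the abstract Sobolev space to a genuine continuous representative: one must know that a $W^{1,2}$ function of one variable has an absolutely continuous version, which is precisely where the dimension $n=1$ enters and where the argument fails in higher dimensions without extra hypotheses. Everything else is a routine application of H\"older's inequality and Arzel\`a--Ascoli, so I do not anticipate further obstacles.
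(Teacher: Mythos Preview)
Your argument is correct for the one-dimensional case, which is precisely the setting in which the lemma is invoked later (namely $\Omega=[-k,k]$). The paper itself offers no proof at all: the lemma is stated with a citation to reference [47] and treated as a standard result. So your write-up is not an alternative route so much as an actual proof where the paper gives none. The Morrey-type H\"older estimate combined with Arzel\`a--Ascoli is the canonical elementary argument in one dimension, and your sketch handles the key points (absolutely continuous representative, uniform $C^{0,1/2}$ bound, equicontinuity) cleanly. You are also right to flag that the lemma as literally stated---$W^{1,2}(\Omega,\R)\subset C(\Omega,\R)$ for a general bounded domain---fails in dimension $n\ge 2$; the paper's wording is loose on this point, but since only intervals are ever used, your one-dimensional treatment is exactly what is needed.
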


\begin{lemma}\label{l2.4}([47])
Let $X$ be a Banach space, then the norm $\|\cdot\|$ is weakly lower semi-continuous.
\end{lemma}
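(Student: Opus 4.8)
The plan is to derive weak lower semicontinuity of $\|\cdot\|$ from the Hahn--Banach theorem together with the bare definition of weak convergence. Recall that the assertion to be proved reads: whenever $x_n \rightharpoonup x$ weakly in $X$, one has $\|x\| \le \liminf_{n\to\infty}\|x_n\|$. So I would start by fixing an arbitrary $x\in X$ and an arbitrary sequence $x_n\rightharpoonup x$; the case $x=0$ being trivial, assume $x\neq 0$.

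First I would produce a norming functional: by the Hahn--Banach theorem there exists $f\in X^{*}$ with $\|f\|_{X^{*}}=1$ and $f(x)=\|x\|$, obtained by extending the functional $\lambda x\mapsto \lambda\|x\|$ from $\mathrm{span}\{x\}$ to all of $X$ without increasing its norm. Second, since $x_n\rightharpoonup x$, the definition of weak convergence gives $f(x_n)\to f(x)$, and combining this with the elementary bound $|f(x_n)|\le \|f\|_{X^{*}}\|x_n\|=\|x_n\|$ yields
\[
\|x\| \;=\; f(x) \;=\; \lim_{n\to\infty} f(x_n) \;=\; \liminf_{n\to\infty} f(x_n) \;\le\; \liminf_{n\to\infty}\|x_n\|,
\]
which is exactly the claimed weak lower semicontinuity.

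I do not expect any genuine obstacle here: the entire content of the lemma is the Hahn--Banach step, which converts the trivial inequality $|f(x)|\le\|x\|$ into an equality for a well-chosen $f$. If one preferred a more structural argument, one could instead note that $\|\cdot\|$ is convex and norm-continuous, hence strongly lower semicontinuous, and then invoke Mazur's theorem (a convex, strongly lower semicontinuous function on a Banach space is weakly lower semicontinuous); but the Hahn--Banach route above is shorter and self-contained, so that is the one I would write out.
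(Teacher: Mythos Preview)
Your argument is correct and is the standard Hahn--Banach proof of weak lower semicontinuity of the norm. Note, however, that the paper does not actually supply a proof of this lemma: it is stated with a citation to reference~[47] and used as a black box, so there is no in-paper argument to compare against. Your write-up would serve perfectly well as the missing proof.
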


\begin{lemma}\label{l2.5}
Assume $V(t,q)$ is defined as in Theorem 1.2, then $I(q)$ is weakly lower semi-continuous.
\end{lemma}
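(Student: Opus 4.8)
The plan is to split $I = I_1 - I_2$, where $I_1(q) = \frac12\|q\|_{H^1}^2$ and $I_2(q) = \int_{-\infty}^{+\infty} a(t)|q|^\alpha\,dt$, and show that $I_1$ is weakly lower semi-continuous while $I_2$ is weakly continuous; the sum of a w.l.s.c.\ functional and a weakly continuous one is w.l.s.c. The first part is immediate: $I_1$ is (half the square of) the Hilbert space norm on $H^1$, so by Lemma~\ref{l2.4} it is weakly lower semi-continuous. So the real content is the weak continuity of $I_2$.

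To prove $I_2$ is weakly continuous, take $q_m \rightharpoonup q$ weakly in $H^1(R,R^n)$. First I would invoke the pointwise embedding inequality from Lemma~2.1, $\|q\|_{L^\infty} \le \pi^{1/2}\|q\|_{H^1}$, together with the uniform boundedness principle, to get a uniform bound $\sup_m \|q_m\|_{L^\infty} \le M < \infty$ and $\|q\|_{L^\infty} \le M$. Next, for any fixed bounded interval $[-R,R]$, the embedding $W^{1,2}([-R,R],R^n) \hookrightarrow C([-R,R],R^n)$ is compact (Lemma~\ref{12.3}), so along a subsequence $q_m \to q$ uniformly on $[-R,R]$; hence $\int_{-R}^{R} a(t)|q_m|^\alpha\,dt \to \int_{-R}^{R} a(t)|q|^\alpha\,dt$, using that $a$ is continuous (hence bounded on $[-R,R]$) and $x \mapsto |x|^\alpha$ is continuous. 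For the tail, use ($a_2$): since $a \in L^1(R)$, given $\varepsilon > 0$ choose $R$ so large that $\int_{|t|>R} a(t)\,dt < \varepsilon$; then
\[
\int_{|t|>R} a(t)\,|q_m|^\alpha\,dt \;\le\; M^\alpha \int_{|t|>R} a(t)\,dt \;<\; M^\alpha \varepsilon,
\]
and the same bound holds for $q$. Combining the interval estimate and the tail estimate gives $|I_2(q_m) - I_2(q)| \to 0$ along the subsequence, and since every subsequence has a further subsequence converging to the same limit $I_2(q)$, the full sequence converges. Therefore $I_2$ is weakly continuous, and $I = I_1 - I_2$ is weakly lower semi-continuous.

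The main obstacle — and the only step requiring care — is the tail control: the compact Sobolev embedding only works on bounded domains, so one cannot directly conclude uniform convergence of $q_m$ on all of $R$. The uniform $L^\infty$ bound from Lemma~2.1 combined with the integrability $a \in L^1(R)$ is exactly what lets one make the tail contribution uniformly small, decoupling the "compact part" on $[-R,R]$ from the negligible "tail part." A minor technical point is the passage from subsequential convergence to convergence of the whole sequence, handled by the standard subsequence-of-subsequence argument since the limit $I_2(q)$ is independent of the chosen subsequence.
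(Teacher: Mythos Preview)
Your proof is correct and follows essentially the same strategy as the paper: split $I=I_1-I_2$, use weak lower semicontinuity of the norm for $I_1$, and show $I_2$ is weakly continuous by combining the compact embedding $H^1([-R,R])\hookrightarrow C([-R,R])$ on bounded intervals with a tail estimate driven by $a\in L^1(R)$. The only cosmetic difference is in the tail step: the paper invokes the decay $q_l(t)\to 0$ as $|t|\to\infty$, whereas you use the uniform $L^\infty$ bound $\|q_m\|_{L^\infty}\le M$ coming from weak boundedness---your version is slightly cleaner since the smallness is manifestly uniform in $m$, but the content is the same.
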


{\bf Proof}:\
We define \\
$I_k(q)=\frac{1}{2}\int_{-k}^{k}(|q|^2+|\dot{q}|^2)dt-\int_{-k}^{k}a(t)|q|^{\alpha}dt$,\\
$\tilde{q}_l(t)=q_l(t)\chi_{[-k,k]}$,\\
$\tilde{q}(t)=q(t)\chi_{[-k,k]}$.\\
We divide the proof into three steps:

Step 1: if $q_l(t)\rightharpoonup q(t)$ in $H^1(R,R^n)$, then $\tilde{q}_l(t)\rightharpoonup \tilde{q}(t)$ in $H^1([-k,k],R^n)=H^1[-k.k]$.

For any $\tilde{f}(t)\in H^1([-k,k],R^n)$, define
 \begin{equation*}
 f(x)=
\begin{cases}
\displaystyle \tilde{f}(t),  &\;\; t\in [-k,k];\\
\displaystyle 0, &\;\; x\notin [-k,k].
\end{cases}
\end{equation*}
Then
$$\int_{-k}^k\tilde{f}(t)\tilde{q}_l(t)dt=\int_{-\infty}^{+\infty} f(t)q_l(t)dt,$$

$$\int_{-k}^k\dot{\tilde{f}}(t)\dot{\tilde{q}}_l(t)dt=\int_{-\infty}^{+\infty}\dot{f}(t)\dot{q}_l(t)dt,$$

$$\int_{-k}^k\tilde{f}(t)\tilde{q}(t)dt=\int_{-\infty}^{+\infty} f(t)q(t)dt,$$

$$\int_{-k}^k\dot{\tilde{f}}(t)\dot{\tilde{q}}(t)dt=\int_{-\infty}^{+\infty}\dot{f}(t)\dot{q}(t)dt.$$
Since $q_l(t)\rightharpoonup q(t)$ in $H^1(R,R^n)$,then by Riesz representation Theorem,we have that
$$\int_{-\infty}^{+\infty}f(t)q_l(t)dt+\int_{-\infty}^{+\infty}\dot{f}(t)\dot{q}_l(t)dt\rightarrow
\int_{-\infty}^{+\infty}f(t)q(t)dt+\int_{-\infty}^{+\infty}\dot{f}(t)\dot{q}(t)dt,$$
then
\begin{equation*}
\int_{-k}^k\tilde{f}(t)\tilde{q}_l(t)dt+\int_{-k}^k\dot{\tilde{f}}(t)\dot{\tilde{q}}_l(t)dt\rightarrow
\int_{-k}^k\tilde{f}(t)\tilde{q}(t)dt+\int_{-k}^k\dot{\tilde{f}}(t)\dot{\tilde{q}}(t)dt,
\end{equation*}
that is, $\tilde{q}_l(t)\rightharpoonup \tilde{q}(t)$ in $H^1[-k,k]$.

Step 2: if $q_l(t)\rightharpoonup q(t)$ in $H^1(R,R^n)$, then by Step 1,we have that $\tilde{q}_l(t)\rightharpoonup \tilde{q}(t)$ in $H^1([-k,k],R^n)=H^1[-k.k]$.
By compact embedding theorem (Lemma 2.3) and functional analysis, $\tilde{q}_l(t)\rightarrow \tilde{q}(t)$
 uniformly in $C([-k,k],R^n)$,then we have
\begin{equation*}
\lim_{l\rightarrow+\infty}(\int_{-k}^k a(t)|\tilde{q}_l|^{\alpha}dt)\rightarrow \int_{-k}^{+k}a(t)|\tilde{q}(t)|^{\alpha}dt.
\end{equation*}
We have known that for $q,q_l\in W^{1,2}(R,R^n)$,when $t\rightarrow\pm\infty,$
$$q(t)\rightarrow 0,q_l(t)\rightarrow 0.$$
So by $a(t)\in L^1(R,R)$ we have
$$\int_{|t|\geq k}a(t)|q|^{\alpha}dt\leq\sup_{|t|\geq k}|q(t)|^{\alpha}\int_{|t|\geq k}a(t)dt\rightarrow 0,k\rightarrow +\infty.$$

$$\int_{|t|\geq k}a(t)|q_l|^{\alpha}dt\leq\sup_{|t|\geq k}|q_l(t)|^{\alpha}\int_{|t|\geq k}a(t)dt\rightarrow 0,k\rightarrow +\infty.$$
Hence
\begin{equation*}
\lim_{l\rightarrow+\infty}\lim_{k\rightarrow+\infty}(-\int_{-k}^k a(t)|\tilde{q}_l|^{\alpha}dt)
=\lim_{l\rightarrow+\infty}-\int_{-\infty}^{+\infty}a(t)|q_l|^{\alpha}dt
= -\int_{-\infty}^{+\infty}a(t)|q(t)|^{\alpha}dt.
\end{equation*}
Since the norm is weakly lower semi-continuous (w.l.s.c.), so we
have $\underline{\lim}\|q_l\|_{H^1(R,R^n)}\geq\|q\|_{H^1(R,R^n)}$.
So $\lim\inf _{l\rightarrow+\infty}I(q_l)\geq I(q)$. Then we apply
the Lemma 2.1 to get a minimizer for $I(q)$ on $H^{1,2}(R,R^n)$.

Step 3: Now we need to prove the minimizer is non-zero and
$q(\pm\infty)=0$ and $\dot{q}(\pm\infty)=0$. For any given $q_0\in
H^1(R,R^n)$,since $1<\alpha<2$, so for $r>0$ small enough,we have
$$I(rq_0)=\frac{1}{2}\int_{-\infty}^{+\infty}r^2(|q_0|^2+|\dot{q}_0|^2)dt-
r^{\alpha}\int_{-\infty}^{+\infty}a(t)|q_0|^{\alpha} dt<0.$$
So
$$min_{q\in H^1(R,R^n)}I(q)<0,$$
and the minimum value must be negative, hence the minimizer must be
non-zero. Similar to Rabinowitz [36] P.37, we can prove the
minimizer $q(t)$ satisfies that
$$q(\pm\infty)=0,\dot{q}(\pm\infty)=0.$$
By $q\in H^1(R,R^n)$,we have
$$\int_{|t|\geq A}(|q(t)|^2+|\dot{q}(t)|^2)dt\rightarrow 0.$$
By [36],we have
$$|q(t)|\leq 2[\int_{t-\frac{1}{2}}^{t+\frac{1}{2}}(|\dot{q}(s)|^2+|q(s)|^2)ds]^{1/2}.$$
Hence $q(\pm\infty)=0.$

By [36],we have
$$|\dot{q}(t)|\leq 2[\int_{t-\frac{1}{2}}^{t+\frac{1}{2}}(|\ddot{q}(s)|^2+|\dot{q}(s)|^2)ds]^{1/2}.$$
So if $\int_A^{A+1}|\ddot{q}(t)|^2dt\rightarrow 0$, as $A\rightarrow
+\infty$,we have $\dot{q}(\pm\infty)=0.$ Since $q(t)$ is the
solution of (1.1), so we have
$$|\ddot{q}(t)|^2=|-V_q(t,q)|^2\leq 2(|q(t)|^2+\alpha^2 |a(t)|^2|q(t)|^{2(\alpha-1)})$$
Since we have proved $q(t)\rightarrow 0$ as $ t\rightarrow \pm\infty,$
and by $a(t)\in L^2$,we can have
$$\int_A^{A+1}(|q(t)|^2+\alpha^2 |a(t)|^2|q(t)|^{2(\alpha-1)})dt\rightarrow 0.$$
$$\dot{q}(\pm\infty)=0.$$

\end{document}